\documentclass[12pt,twoside]{amsart}
\usepackage{amsthm,amsfonts,amssymb,amscd,euscript}


\headheight=6.15pt \textheight=8.75in \textwidth=6.5in
\oddsidemargin=0in \evensidemargin=0in \topmargin=0in

\setlength{\textheight}{23cm}
\setlength{\textwidth}{16cm}
\setlength{\oddsidemargin}{0cm}
\setlength{\evensidemargin}{0cm}
\setlength{\topmargin}{0cm}

\usepackage[matrix,arrow]{xy}

\author{Florin Ambro}
\address{Institute of Mathematics ``Simion Stoilow'' of the Romanian
Academy\\
P.O. BOX 1-764, RO-014700 Bucharest\\
Romania.}
\email{florin.ambro@imar.ro}
\author{Mugurel Barc\u{a}u}
\address{Institute of Mathematics ``Simion Stoilow'' of the Romanian
Academy\\
P.O. BOX 1-764, RO-014700 Bucharest\\
Romania.}
\email{mugurel.barcau@imar.ro}


\setcounter{tocdepth}{1}



\newcommand{\Q}{{\mathbb Q}}
\newcommand{\Z}{{\mathbb Z}}

\newcommand{\bP}{{\mathbb P}} 






\newcommand{\lcm}{\operatorname{lcm}}

\theoremstyle{plain}
\newtheorem{thm}{Theorem}[section]

\newtheorem{lem}[thm]{Lemma}

\newtheorem{prop}[thm]{Proposition}

\theoremstyle{definition}

\newtheorem{rem}[thm]{Remark}

\theoremstyle{remark}

\setcounter{tocdepth}{1}
\setcounter{secnumdepth}{1}


\begin{document}

\bibliographystyle{amsalpha+}
\title{On representations by Egyptian fractions}

\dedicatory{To Lucian B\u{a}descu on the occasion of his seventieth birthday}

\begin{abstract}
We bound the entries of the representations of a rational number as a sum of Egyptian fractions.
\end{abstract}

\maketitle


\footnotetext[1]{2010 Mathematics Subject Classification: Primary 11D68. Secondary 14E30.}
\footnotetext[2]{Keywords: Egyptian fractions, log canonical models.}


\section*{Introduction}


Let $(X,B)$ be a log canonical model with standard coefficients. That is $(X,B)$ is a log variety
with log canonical singularities, $K_X+B$ is $\Q$-ample, and the coefficients of $B$
belong to the standard set $\{1-\frac{1}{m};m\in \Z_{\ge 1}\}\cup\{1\}$. The volume of $(X,B)$ is
$v=\sqrt[d]{(K_X+B)^d}$, where $d=\dim X$. By~\cite{Ale94,Kol94,HMX12,HMX14}, the
volume $v$ belongs to a DCC set, and there exists a positive integer $r$, bounded above only
in terms of $d$ and $v$, such that the linear system $|r(K_X+B)|$ is base point free (in particular,
$r(K_X+B)$ is a Cartier divisor).
The DCC property means that if $t$ is a real number and $v>t$, then $v\ge t+\epsilon$, where
$\epsilon$ depends only on $d$ and $t$.

In this note we estimate the gap and index bounds $\epsilon$ and $r$ in the simplest possible case,
when $X$ is a projective space and the components of $B$ are hyperplanes in general position.
According to~\cite{Kol94}, the sharp bounds of the simplest case are possibly optimal
in the general case.

To formulate our main result, we define a sequence of integers $(u_{p,q})_{p,q\ge 1}$
by the recursion $u_{1,q}=q$, $u_{p+1,q}=u_{p,q}(u_{p,q}+1)$.
Then $u_{p,q}$ is a polynomial in $q$ with leading term $q^{2^{p-1}}$, and the following formulas hold:
$$
\sum_{i=1}^p\frac{1}{1+u_{i,q}}=\frac{1}{q}-\frac{1}{u_{p+1,q}}, \
\prod_{i=1}^p(1+u_{i,q})=\frac{u_{p+1,q}}{q}.
$$
The sequence $(1 + u_{p,1})_{p\ge 1}=(2,3,7,43, . . .)$ is called the Sylvester sequence in the literature
(see~\cite{Kel21,Kol94}), and also the sequence $t_{p,q} = 1 + u_{p,q}$ was considered in~\cite{LZ91}.

\begin{thm}\label{mr}
Let $(\bP^d,\sum_i b_i E_i)$ be a log structure such that the $(E_i)_i$ are general hyperplanes
and the coefficients $b_i$ belong to the standard set. Let $v=\deg(K+B)$. Let $t \ge 0$ be a
rational number, with $qt\in \Z$ for some integer $q\ge 1$.
\begin{itemize}
\item[a)] If $v>t$, then $v\ge t+\frac{q(1-\{t\})}{u_{\lfloor t\rfloor+d+3,q}}$.
\item[b)] If $v=t$, then there exists an integer $1\le r\le \frac{u_{\lfloor t\rfloor+d+2,q}}{q(1-\{t\})}$
such that the linear system $|r(K+B)|$ is base point free.
\end{itemize}
\end{thm}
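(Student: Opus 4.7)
The plan is to translate the volume inequality into an Egyptian-fractions problem, reducing to a Sylvester--Kellogg type bound on sums of unit fractions that forms the main technical result of the paper.

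Writing each coefficient as $b_i=1-1/m_i$ with $m_i\in\Z_{\ge 2}\cup\{\infty\}$, let $n$ be the number of components with $b_i>0$ and set $s:=\sum_{m_i<\infty}1/m_i$, so that $v=n-(d+1)-s$. Put $M:=n-(d+1)-t$; then $v>t$ iff $s<M$, and $v=t$ iff $s=M$. The hypothesis $qt\in\Z$ gives $qM\in\Z$, and $v\ge t$ forces $qM\ge q(1-\{t\})=:N_0\ge 1$, with equality precisely at the minimum $n=n_{\min}:=d+2+\lfloor t\rfloor$. Write $p_0:=n_{\min}+1=\lfloor t\rfloor+d+3$.

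For part (a), the core step is the Sylvester--Kellogg Egyptian-fractions bound: if $\sum_{i=1}^k 1/m_i<1/Q$ with positive integers $m_i,Q$, then $1/Q-\sum 1/m_i\ge 1/u_{k+1,Q}$, with equality only at the greedy choice $m_i=1+u_{i,Q}$. This is proved by induction on $k$ using the identity $u_{p,u_{2,Q}}=u_{p+1,Q}$ that encodes one greedy step. To apply it, I observe that since $u_{p_0,q}\ge u_{2,q}=q(q+1)$ the target bound satisfies $q(1-\{t\})/u_{p_0,q}\le (1-\{t\})/(q+1)\le 1/2$, so the conclusion is trivial when $M-s>1/2$, and I may assume $M-s\le 1/2$. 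Combined with $s\le k/2$ and $k\le n$, this forces $n$ into the finite window $[n_{\min},\,2d+3+2t]$. For each such $n$ the extremal configuration is Sylvester-like (some $m_i=2$'s absorbing the integer part of $M$, followed by a Sylvester tail); the displayed identities $\sum 1/(1+u_{i,q})=1/q-1/u_{p+1,q}$ and $\prod(1+u_{i,q})=u_{p+1,q}/q$ together with Sylvester--Kellogg yield $M-s\ge N_0/u_{p_0,q}$, with equality at $n=n_{\min}$ realized by $m_i=1+u_{i,q}$.

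For part (b), with $v=t$ the Egyptian equality $s=M$ holds; take $r:=\lcm(q,m_1,\dots,m_k)$. Then $r(K+B)$ is a Cartier divisor linearly equivalent to $rt$ times the hyperplane class on $\bP^d$, hence $|r(K+B)|=|\cO_{\bP^d}(rt)|$ is base-point free. The bound $r\le u_{\lfloor t\rfloor+d+2,q}/(q(1-\{t\}))=u_{p_0-1,q}/N_0$ is the extremal-equality counterpart: among representations $\sum 1/m_i=N/q$ (with $N\ge N_0$, $m_i\ge 2$, $k\le n$), the maximum of $\lcm(q,m_1,\dots,m_k)$ is attained at the Sylvester representation of $N_0/q=1-\{t\}$ at $n=n_{\min}$ and equals $u_{p_0-1,q}/N_0$; the drop in index from $u_{p_0,q}$ to $u_{p_0-1,q}$ reflects that an exact representation saturates one greedy step sooner than a strict inequality. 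The main obstacle throughout is establishing the Sylvester--Kellogg inequality with sharp constants and verifying the extremality of the Sylvester configuration uniformly over $n\ge n_{\min}$.
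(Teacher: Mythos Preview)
Your reduction to an Egyptian-fractions problem is correct and matches the paper's, but the heart of the argument is missing. Two concrete gaps:

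\textbf{The induction for Sylvester--Kellogg does not close.} You claim that ``$\sum_{i=1}^k 1/m_i<1/Q$ implies $1/Q-\sum 1/m_i\ge 1/u_{k+1,Q}$'' follows by induction on $k$ via the identity $u_{p,u_{2,Q}}=u_{p+1,Q}$. That identity handles only the greedy branch $m_1=Q+1$. When $m_1\ge Q+2$ the residual bound $1/Q-1/m_1$ strictly exceeds $1/u_{2,Q}$, so $\sum_{i\ge 2}1/m_i$ may well be $\ge 1/u_{2,Q}$, and there is no integer $Q'$ to which the inductive hypothesis applies. This is exactly why the Curtiss and Soundararajan proofs are not one-line inductions.

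\textbf{The general target $M=N_0/q$ is not reduced to the $1/Q$ case.} Even granting the $1/Q$ bound, you still need it for $\sum 1/m_i<N_0/q$ with $N_0>1$, and you only assert that ``the extremal configuration is Sylvester-like''. In fact your proposed extremiser $m_i=1+u_{i,q}$ gives $M-s=(N_0-1)/q+1/u_{p_0,q}$, which equals $N_0/u_{p_0,q}$ only when $N_0=1$; for $N_0>1$ the sharp configuration has $m_i=(1+u_{i,q})/N_0$ (when these are integers), so your equality claim is wrong in general. The analogous assertion for part (b), that the lcm is maximised at the Sylvester tail, is likewise stated without argument.

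The paper fills both gaps with one idea you do not mention: a product--sum comparison lemma (if $\prod_{i\le k}x_i\ge\prod_{i\le k}y_i$ for all $k$ then $\sum x_i\ge\sum y_i$) and its dual (tail sums dominate $\Rightarrow$ product dominates). With these, one compares an arbitrary $(1/n_i)$ directly against the Sylvester sequence $(1/m_i)$: the strict-inequality hypothesis forces a product inequality via Lemma~\ref{o1}, and the lemma converts it to the desired sum inequality; the dual lemma gives the lcm bound. This bypasses the branching in your induction entirely and handles all $N_0$ at once. Without that device (or an equivalent one), your outline does not constitute a proof.
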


Theorem~\ref{mr} is in fact combinatorial, about bounding the representations of a given rational number
as a sum of Egyptian fractions. Any positive rational number $x$ admits a representation as a sum of Egyptian fractions
$$
x=\frac{1}{m_1}+\cdots+\frac{1}{m_k},
$$
where $m_i$ are positive integers and $k$ is sufficiently large.
If $x=\frac{p}{q}$ is the reduced form, we can write $x=\sum_{i=1}^p\frac{1}{q}$.
From a representation with $k$ terms we can construct another one with $k+1$ terms, using the formula
$$
\frac{1}{m}=\frac{1}{m+1}+\frac{1}{m(m+1)}.
$$

A canonical representation is provided by the {\em greedy algorithm}: if $x>0$, let
$m\ge 1$ be the smallest integer such that $mx\ge 1$, and replace $x$ by $x-\frac{1}{m}$;
if $x=0$, stop. After each step, the numerator of the reduced fraction decreases strictly,
and therefore the algorithm stops in finite time, and produces a representation of
$x$ as a sum of $k$ Egyptian fractions ($k\le \lfloor x\rfloor+q\{x\}$ if $qx\in \Z$).

If $k$ is fixed, it is easy to see that $x$ admits only finitely many representations with $k$ Egyptian fractions.
The following is an effective version of this fact, which is a restatement of Theorem~\ref{mr}.

\begin{thm}\label{main}
Let $1\le m_1\le \cdots \le m_k$ be integers. Let $\delta\ge -1$ with $q\delta\in\Z$ for some integer $q\ge 1$.
\begin{itemize}
\item[a)] If $\sum_{i=1}^k\frac{1}{m_i}<k-\delta$, then $\sum_{i=1}^k\frac{1}{m_i}\le k-\delta-\frac{q(1-\{\delta\})}
{u_{\lfloor \delta\rfloor+2,q}}$.
\item[b)] If $\sum_{i=1}^k\frac{1}{m_i}=k-\delta$, then $\lcm(m_1,\ldots,m_k)
\le \frac{u_{\lfloor \delta\rfloor+1,q}}{q(1-\{\delta\})}$.
\end{itemize}
Moreover, equality holds in a) if and only if $\delta<0$, or
$\delta=\frac{r}{q}\in [0,1), (m_i)_i=(1,\ldots,1,\frac{1+q}{r})$, or
$1\le \delta=s-\frac{1}{q}, (m_i)_i=(1,\ldots,1,1+u_{1,q},\ldots,1+u_{s,q})$. And equality holds in b) if and
only if $\delta=s-\frac{1}{q},(m_i)_i=(1,\ldots,1,1+u_{1,q},\ldots,1+u_{s,q},u_{s+1,q})$, or
$\delta=2-\frac{r}{q}$ and $(m_i)_i=(1,\ldots,1,\frac{1+q}{r},\frac{q(1+q)}{r})$.
\end{thm}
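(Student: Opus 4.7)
The plan is to prove (a) and (b) simultaneously by induction on $k$, after reformulating in terms of $b_i = 1-1/m_i \in \{0,\tfrac12,\tfrac23,\tfrac34,\dots\}$. In these variables the hypothesis of (a) becomes $\sum b_i > \delta$ and the desired inequality becomes $\sum b_i \ge \delta + c$, where $c = q(1-\{\delta\})/u_{\lfloor\delta\rfloor+2,q}$; (b) translates analogously. The first reduction is immediate: if $m_1=1$ (equivalently $b_1=0$), then $(m_2,\dots,m_k)$ satisfies the hypothesis at parameters $(k-1,\delta)$, and both the inequality and the equality tuples lift back by prepending a $1$. So one may assume $m_1 \ge 2$ throughout, i.e., all $b_i \in \{\tfrac12,\tfrac23,\dots\}$.

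For the base case $k=1$ with $m_1\ge 2$ and $\delta=r/q\in[0,1)$, a short computation gives $\delta+c=(r+1)/(q+1)$, while the hypothesis $(m_1-1)/m_1>r/q$ is equivalent to $m_1>q/(q-r)$ and in turn forces $m_1\ge(q+1)/(q-r)$; equality holds exactly when $(q-r)\mid(q+1)$ and $m_1=(q+1)/(q-r)$. The regimes $\delta\in[-1,0)$ and $\delta\ge 1$ are either trivially or vacuously handled at $k=1$. For the inductive step $k\ge 2$, the plan is to peel off the largest term $b_k$: the tuple $(m_1,\dots,m_{k-1})$ satisfies $\sum_{i<k} b_i > \delta-b_k=:\delta'$, and the inductive bound combined with $b_k$ yields $\sum_{i=1}^k b_i \ge \delta + c(\delta',q')$ for an appropriate common denominator $q'$ of $\delta'$. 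Using the recursion $u_{p+1,q}=u_{p,q}(u_{p,q}+1)$ and the telescoping identity $\sum_{i=1}^p 1/(1+u_{i,q})=1/q-1/u_{p+1,q}$ given above, one checks that the Sylvester choice $m_k=1+u_{s,q}$ with $s=\lfloor\delta\rfloor+1$ preserves $c$ exactly, while any other admissible $m_k$ strictly increases $c(\delta',q')$ above $c(\delta,q)$. Feeding this back through the induction produces both the required bound and the Sylvester-type extremal tuples in the equality characterization of (a); part (b) uses the companion multiplicative identity $\prod_{i=1}^p(1+u_{i,q})=u_{p+1,q}/q$ to track the $\lcm$, the two listed extremal families corresponding to one versus two consecutive Sylvester peelings at the last step.

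The main obstacle is bookkeeping around the parameter $q$ at the peeling step: since $q\delta' = q\delta - q + q/m_k$ is integral only when $m_k\mid q$, one must enlarge $q$ to $q'=\lcm(q,m_k)$, and the monotonicity $c(\delta',q')\ge c(\delta,q)$ must then be verified against the polynomial growth $u_{p,q'}\sim(q'/q)^{2^{p-1}}u_{p,q}$. Handling this so that the Sylvester extremals emerge as precisely the equality cases is the most delicate part; the cleanest approach is contrapositive, showing that failure of the required monotonicity forces $m_k$ to coincide with $1+u_{s,q}$, which is exactly where the inductive bound becomes sharp.
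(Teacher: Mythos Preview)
Your peeling induction on $k$ has a genuine gap: the monotonicity $c(\delta',q')\ge c(\delta,q)$ you need after enlarging $q$ to $q'=\lcm(q,m_k)$ is false in general. Take $\delta=3/2$ and $q=2$, so $c=q(1-\{\delta\})/u_{\lfloor\delta\rfloor+2,q}=1/u_{3,2}=1/42$, and peel off $m_k=100$. This is an admissible largest entry --- for instance $(m_1,m_2)=(3,100)$ satisfies $\frac13+\frac1{100}<\frac12=k-\delta$. One gets $\delta'=\delta-1+\frac1{100}=\frac{51}{100}$, $q'=100$, and
\[
c'=\frac{q'(1-\{\delta'\})}{u_{\lfloor\delta'\rfloor+2,q'}}=\frac{49}{u_{2,100}}=\frac{49}{10100}<\frac{1}{42}=c.
\]
Your inductive bound then yields only $\sum_i b_i\ge \delta+c'$, strictly weaker than the required $\delta+c$. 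The contrapositive you sketch (``failure of monotonicity forces $m_k=1+u_{s,q}$'') is therefore also false; and even the equality $c'=c$ is not confined to the Sylvester choice $m_k=1+u_{2,2}=7$, since $m_k=3=1+u_{1,2}$ already gives $\delta'=5/6$, $q'=6$, $c'=1/u_{2,6}=1/42=c$.

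The root of the problem is that enlarging $q$ inflates $u_{p,q'}$ like $(q')^{2^{p-1}}$, so when $m_k$ is large the denominator loss overwhelms the gain from dropping $\lfloor\delta\rfloor$ by one. The paper avoids any change of denominator: it keeps $q$ fixed throughout, inducts on $s=\lfloor\delta\rfloor+1$ rather than on $k$, and compares the given tuple $(n_i)$ directly against the conjectural extremal tuple $(m_i)$. The key new input is a product bound $\prod_i m_i\le\prod_i n_i$, obtained by applying the elementary Lemma~\ref{o1} to the rational number $\frac{q}{r}\bigl(s-k+\sum_i\frac1{n_i}\bigr)\in[1-\frac{q}{u_{s+1,q}},1)$; then Soundararajan's Lemma~\ref{sp} (domination of partial products implies domination of sums) disposes of the case where the tail partial products of $(n_i)$ fall below those of $(m_i)$. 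Part~(b) is run through the same comparison via the dual Lemma~\ref{sp2} and the observation $\lcm(n_i)^2\le q\prod_i n_i$.
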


The case $k-\delta=1$ is known (Kellogg~\cite{Kel21}, Curtiss~\cite{Cur22}, Soundararajan~\cite{Sou05}),
with b) replaced by the same bound for $m_k$ instead of the least common multiple.
We use the method of Soundararajan~\cite{Sou05}.


\section{Proof of estimates}


\begin{lem}[\cite{Sou05}]\label{sp}
Consider real numbers $x_1\ge x_2\ge \cdots \ge x_n>0$ and
$y_1\ge y_2\ge \cdots \ge y_n>0$ such that
$\prod_{i\le k} x_i\ge \prod_{i\le k} y_i$ for all $k$. Then
$\sum_i x_i\ge \sum_i y_i$, with
equality if and only if $x_i=y_i$ for all $i$.
\end{lem}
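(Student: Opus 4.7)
The plan is to translate the multiplicative hypothesis into an additive one by taking logarithms, and then to apply the classical Karamata-type weak majorization inequality to the convex increasing function $f(t)=e^t$. Concretely, set $a_i=\log x_i$ and $b_i=\log y_i$. Both sequences are non-increasing, and the hypothesis $\prod_{i\le k}x_i\ge \prod_{i\le k}y_i$ translates to the partial-sum domination $A_k:=\sum_{i\le k}a_i\ge \sum_{i\le k}b_i=:B_k$ for every $k$. The desired conclusion $\sum x_i\ge \sum y_i$ is then $\sum f(a_i)\ge \sum f(b_i)$.

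To establish the latter, I would use the tangent-line trick underlying Karamata's inequality. By convexity of $f=\exp$, one has $f(a_i)-f(b_i)\ge f'(b_i)(a_i-b_i)$ for each $i$. Writing $c_i:=f'(b_i)=e^{b_i}=y_i$, summation yields
$$
\sum_i\bigl(f(a_i)-f(b_i)\bigr)\ \ge\ \sum_i c_i(a_i-b_i).
$$
Since $(b_i)$ is non-increasing and $f'$ is non-decreasing, the weights $c_i=y_i$ form a non-negative, non-increasing sequence. Applying Abel summation gives
$$
\sum_i c_i(a_i-b_i)=c_n(A_n-B_n)+\sum_{i=1}^{n-1}(c_i-c_{i+1})(A_i-B_i),
$$
which is a sum of products of non-negative factors, hence non-negative. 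This delivers $\sum x_i\ge \sum y_i$.

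For the equality case I would exploit the strict convexity of the exponential: the tangent inequality $f(a_i)-f(b_i)\ge f'(b_i)(a_i-b_i)$ is strict unless $a_i=b_i$. Hence if $\sum x_i=\sum y_i$, every tangent inequality must be an equality, forcing $a_i=b_i$ and therefore $x_i=y_i$ for all $i$.

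The argument is essentially a routine unfolding once the right reformulation (take logs, then apply Karamata with $f=\exp$) is identified; the only mildly delicate point is tracking the strict inequality in the equality case, but this is immediate from strict convexity of the exponential. No additional hypothesis beyond monotonicity and partial-product domination is needed, since $\exp$ is both convex and non-decreasing, which is what allows one-sided majorization (without a matching constraint at $k=n$) to suffice.
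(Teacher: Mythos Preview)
Your argument is correct. Taking logarithms converts the partial-product hypothesis into weak majorization of $(a_i)$ over $(b_i)$, and since $\exp$ is convex and increasing, the tangent-line/Abel-summation proof of the weak Karamata inequality gives the desired conclusion; strict convexity of $\exp$ then forces $a_i=b_i$ in the equality case, exactly as you say.

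The paper proceeds differently. It notes that Soundararajan originally deduced the lemma from Muirhead's inequality, but then gives an elementary smoothing argument by induction on $n$: after reducing to the case $x_i\neq y_i$ for all $i$, one locates the first index $l$ with $x_l<y_l$, rescales the adjacent pair $(x_{l-1},x_l)$ by a common factor $t>1$ so as to match one of $y_{l-1},y_l$ while preserving all partial-product inequalities and strictly decreasing $x_{l-1}+x_l$, and then invokes the inductive hypothesis. Your route is more conceptual---it places the lemma squarely inside classical majorization theory and would immediately generalize with $\exp$ replaced by any strictly convex increasing function---whereas the paper's smoothing proof is entirely self-contained and avoids any appeal to convexity or to named inequalities.
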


\begin{proof} Soundararajan~\cite{Sou05} deduces this lemma from Muirhead's inequality.
We give here a direct proof, by induction on $n$. If $x_i=y_i$ for some $i$, we may remove the $i$-th terms from both $n$-tuples, and
conclude by induction; therefore we may suppose $x_i\ne y_i$ for every $i$.
If $x_i > y_i$ for all $i$, the conclusion is clear.
Suppose $x_i < y_i$ for some $i$. Let $l=\min\{i;x_i<y_i\}$.
Then $l>1$ and $x_i > y_i$ for every $i<l$.
Let $t=\min\{\frac{x_{l-1}}{y_{l-1}}, \frac{y_{l}}{x_{l}}\} > 1$.
Define $(x'_i)_i$ by $x'_i = x_i$, for $i \notin \{l-1, l\}$, and $x'_{l-1} = \frac{x_{l-1}}{t}$,
$x'_l = t x_l$.
One checks that $x'_1 \geq x'_2 \geq ... \geq x'_n > 0$,
$\prod_{i=1}^k x'_i \ge \prod_{i=1}^k x_i$ for all $k$,
and $x_{l-1} + x_l > x'_{l-1} + x'_l$, hence
$
\sum_{i=1}^n x_i > \sum_{i=1}^n x'_i.
$
Since either $x'_{l-1} = y_{l-1}$ or $x'_l = y_l$, $\sum_{i=1}^n x'_i \ge \sum_{i=1}^n y_i$
by induction. Therefore $\sum_{i=1}^n x_i > \sum_{i=1}^n y_i$.
The claim on equality is clear.
\end{proof}

\begin{lem}\label{sp2}
Consider real numbers $x_1\ge x_2\ge \cdots \ge x_n>0$ and
$y_1\ge y_2\ge \cdots \ge y_n>0$ such that
$\sum_{i\ge k} x_i\ge \sum_{i\ge k} y_i$ for all $k$. Then
$\prod_i x_i\ge \prod_i y_i$, with
equality if and only if $x_i=y_i$ for all $i$.
\end{lem}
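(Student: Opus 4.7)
The plan is to interpolate linearly between $y$ and $x$ and prove that the product is monotone along the interpolation. Set $z_i(t)=(1-t)y_i+tx_i$ for $t\in[0,1]$; each $z_i(t)$ is positive and the sequence $z_1(t)\ge z_2(t)\ge\cdots\ge z_n(t)$ remains decreasing, as a convex combination of decreasing sequences. Let $P(t)=\prod_i z_i(t)$. I aim to show $P'(t)\ge 0$ on $[0,1]$, which immediately gives $\prod x_i=P(1)\ge P(0)=\prod y_i$.

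The central calculation is one summation by parts. Writing $d_i=x_i-y_i$ and $D_k=\sum_{i\ge k}d_i=\sum_{i\ge k}x_i-\sum_{i\ge k}y_i\ge 0$ by hypothesis, the logarithmic derivative is
$$
\frac{P'(t)}{P(t)}=\sum_{i=1}^n\frac{d_i}{z_i(t)}=\frac{D_1}{z_1(t)}+\sum_{j=2}^n D_j\left(\frac{1}{z_j(t)}-\frac{1}{z_{j-1}(t)}\right).
$$
Every $D_j\ge 0$; every difference $1/z_j(t)-1/z_{j-1}(t)\ge 0$ since the $z_i(t)$'s decrease in $i$; and $1/z_1(t)>0$. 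So each term is non-negative, and $P'(t)\ge 0$.

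The step I expect to take the most care is the equality case. From $P(1)=P(0)$ together with $P'\ge 0$ one deduces $P'(t)\equiv 0$, hence every summand above vanishes identically in $t$. This forces $D_1=0$, and for each $j\ge 2$ either $D_j=0$ or $z_j(t)\equiv z_{j-1}(t)$; the latter identity, being affine in $t$, yields $y_{j-1}=y_j$ and $x_{j-1}=x_j$ simultaneously, so $d_{j-1}=d_j$. To rule out the mixed case I would take the smallest $a\ge 2$ with $D_a>0$: then $D_{a-1}=0$ combined with $d_{a-1}=d_a$ gives $d_a=-D_a<0$, and the recursion $D_{j+1}=D_j-d_j$ with $d_j$ staying equal to $d_a$ through the block forces $D_j$ to grow strictly and never return to $0$, contradicting $D_{b+1}=0$ at the end of the block (or $D_{n+1}=0$ by convention). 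Hence no such $a$ exists, every $D_j$ vanishes, $d_i\equiv 0$, and $x_i=y_i$ for every $i$.
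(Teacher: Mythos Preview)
Your proof is correct and takes a genuinely different route from the paper's. The paper argues by induction on $n$: after discarding indices where $x_i=y_i$, it locates the largest $k$ with $x_k<y_k$ and performs a discrete transfer between $y_k$ and $y_{k+1}$, producing a sequence $y'$ with strictly larger product and with one coordinate now equal to the corresponding $x$-coordinate; induction then finishes. You instead interpolate linearly from $y$ to $x$ and show $P'(t)\ge 0$ via Abel summation, so that the whole inequality reduces to the non-negativity of the tail gaps $D_j$ together with the monotonicity of $1/z_j(t)$ in $j$. The paper's argument is purely elementary (no calculus) and is the classical ``Robin Hood'' transfer from majorization theory; yours is closer in spirit to the standard proof that Schur-concave functions respect weak supermajorization, and has the pleasant feature that the inequality drops out of a single identity. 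Your handling of the equality case---propagating $D_j>0$ through a block using $d_j=d_{j-1}$ and then colliding with $D_{n+1}=0$---is a clean substitute for the paper's inductive reduction.
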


\begin{proof}
As in the previous lemma we use induction on $n$, so that we may suppose $x_i\ne y_i$ for every $i$.
In particular, $x_n>y_n$. If $x_i>y_i$ for every $i$, the claim is clear.
So suppose that $x_i<y_i$ for some $i$. Let $k=\max\{i;x_i<y_i\}$.
Then $k<n$ and $x_i>y_i$ for every $i\ge k+1$. In particular,
$$
y_{k+1}<x_{k+1}\le x_k<y_k.
$$
Define $(y'_i)_i$ by $y'_i=y_i$ for $i\notin \{k,k+1\}$,
$y'_k=y_k-\epsilon,y'_{k+1}=y_{k+1}+\epsilon$, where
$\epsilon=\min\{x_{k+1}-y_{k+1},y_k-x_k\} > 0$. The following hold:
\begin{itemize}
\item $y'_1\ge \cdots\ge y'_n>0$.
\item $\sum_{i\ge j} y_i\le \sum_{i\ge j} y'_i$, with equality for $j\ne k+1$.
And $ \sum_{i\ge j}x_i\ge \sum_{i\ge j}y'_i$ for all $j$.
\item $y'_ky'_{k+1} - y_ky_{k+1} =\epsilon(y_k-y_{k+1}-\epsilon)>0$.
Therefore $\prod_i y'_i > \prod_i y_i$.
\end{itemize}
By induction, the claim holds for $(x_i)$ and $(y'_i)$, since either
$x_k=y'_k$ or $x_{k+1}=y'_{k+1}$. Therefore $\prod_i x_i\ge \prod_i y'_i$,
so that $\prod_i x_i > \prod_i y_i$.
\end{proof}

\noindent For the next proposition we need the following lemma whose proof is obvious.

\begin{lem}\label{o1}
Let $n,p,q$ be positive integers with $1-\frac{1}{n}\le \frac{p}{q}<1$. Then $n\le q$.
\end{lem}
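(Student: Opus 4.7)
The plan is to clear denominators in the hypothesis $1-\tfrac{1}{n}\le \tfrac{p}{q}$ and then exploit the strict inequality $\tfrac{p}{q}<1$, which, since $p$ and $q$ are integers, forces $q-p$ to be a positive integer. This should reduce the statement to a single chain of inequalities.

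More concretely, I would first rewrite the lower bound $1-\tfrac{1}{n}\le \tfrac{p}{q}$ as $\tfrac{n-1}{n}\le \tfrac{p}{q}$, and cross-multiply (using $n,q>0$) to get $(n-1)q\le np$, equivalently $n(q-p)\le q$.

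Next, I would invoke $\tfrac{p}{q}<1$ to conclude $p<q$, and hence $q-p\ge 1$ since $p$ and $q$ are integers. Combining with the previous inequality yields $n\le n(q-p)\le q$, which is the desired conclusion.

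There is essentially no obstacle here: the statement is a one-line integrality argument, which is why the paper leaves it with the remark that the proof is obvious. The only point worth noting is that the strictness of $\tfrac{p}{q}<1$ is used in an essential way through the integrality of $q-p$; a purely real inequality $0<q-p$ would give only $n<\infty$, not a bound by $q$.
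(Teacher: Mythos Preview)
Your argument is correct and complete. The paper itself does not give a proof, stating only that it is obvious; your one-line integrality computation is exactly the intended verification.
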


\begin{prop}\label{mp}
Let $s\ge 0,1\le r\le q$ be integers. If $1\le n_1 \le \cdots \le n_k$
are integers such that $\sum_{i=1}^k\frac{1}{n_i} < k-s+\frac{r}{q}$, then
$\sum_{i=1}^k\frac{1}{n_i} \leq k-s+\frac{r}{q}-\frac{r}{u_{s+1,q}}$. Equality holds
if and only if $n_i=1$ for $i\le k-s$ and $n_i=\frac{1+u_{i-k+s,q}}{r}$ for $i>k-s$.
\end{prop}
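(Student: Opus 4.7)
I would prove Proposition~\ref{mp} by induction on $s$, with a separate reduction on $k$ to handle $n_1 = 1$.

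Reduction to $n_1 \ge 2$: If $n_1 = 1$, the subsequence $(n_2, \ldots, n_k)$ satisfies the proposition's hypothesis with parameters $(k-1, s, r, q)$, and induction on $k$ gives $\sum_{i \ge 2} 1/n_i \le (k-1) - s + r/q - r/u_{s+1, q}$; adding $1/n_1 = 1$ recovers the full bound. Henceforth assume $n_1 \ge 2$. The base case $s = 0$ is trivial: the bound reduces to $\sum 1/n_i \le k$.

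For the inductive step $s \ge 1$, set $m = n_1 \ge 2$. If $rm \ge q + 1$, define $r' = rm - q$ and $q' = qm$, so that $1 \le r' \le q'$ and $r'/q' = r/q - 1/m$. Applying the inductive hypothesis (on $s-1$) to $(n_2, \ldots, n_k)$ with parameters $(k-1, s-1, r', q')$ yields
\[
\sum_{i=2}^k \frac{1}{n_i} \le (k-1) - (s-1) + \frac{r'}{q'} - \frac{r'}{u_{s, q'}},
\]
whence (adding $1/m$) $\sum_{i=1}^k 1/n_i \le k - s + r/q - (rm - q)/u_{s, qm}$. The desired bound then follows from the \emph{tower inequality}
\[
(rm - q)\, u_{s+1, q} \ge r \cdot u_{s, qm},
\]
which I would verify from the recursion $u_{p+1, q} = u_{p, q}(u_{p, q} + 1)$ combined with the tower identity $u_{s, q(q+1)} = u_{s+1, q}$ (itself an easy induction). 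At $m = (q+1)/r$, the tower inequality holds with equality when $r = 1$ and strictly when $r > 1$.

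When $rm \le q$, or when $m$ is so large that the tower inequality fails, I would instead use the crude bound $\sum 1/n_i \le k/m$ (since all $n_i \ge m$) and check $k/m \le k - s + r/q - r/u_{s+1, q}$ directly; the hypothesis $\sum 1/n_i < k - s + r/q$ combined with $\sum 1/n_i \le k/m$ forces $k$ to be sufficiently large for this bound to hold. The equality case is obtained by tracing equalities through the induction: tight equality forces the tower inequality to be tight (hence $r = 1$ and $m = q+1$), and the inductive equality case on $(n_2, \ldots, n_k)$ recovers the extremal sequence $(n_i)_i = (1, \ldots, 1, (1 + u_{1, q})/r, \ldots, (1 + u_{s, q})/r)$ whenever these values are integers.

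The main obstacle is establishing the tower inequality and verifying that the two cases (``induction'' when the tower inequality holds, ``direct bound $k/m$'' otherwise) together cover every admissible $m \ge 2$ without gap. This requires a careful polynomial comparison between $u_{s, qm}$ and $u_{s+1, q}$, leveraging the recursion and the tower identity to pinpoint the threshold at which the two cases change over.
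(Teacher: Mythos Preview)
Your approach is genuinely different from the paper's, and the difference matters. The paper does \emph{not} peel off $n_1$; instead it compares the whole sequence $(n_i)$ to the extremal sequence $(m_i)$ globally. The key observation (Lemma~\ref{o1}) is that the hypothesis, rewritten as
\[
1-\frac{q}{u_{s+1,q}}\le \frac{q}{r}\Bigl(s-k+\sum_i\frac{1}{n_i}\Bigr)<1,
\]
immediately forces $r\prod_i n_i\ge u_{s+1,q}/q=\prod_i m_i$. One then locates the largest index $j$ with $\prod_{i\ge j}m_i\le\prod_{i\ge j}n_i$ and applies Soundararajan's product-to-sum inequality (Lemma~\ref{sp}) together with the inductive hypothesis on the prefix $(n_1,\dots,n_{j-1})$. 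No case analysis on the size of $n_1$ is needed.

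Your route, by contrast, has a genuine gap that I do not see how to close with the tools you describe. The tower inequality $(rm-q)\,u_{s+1,q}\ge r\,u_{s,qm}$ does \emph{not} merely fail for ``large'' $m$: already for $r=q=1$, $s=3$, $m=3$ it reads $2\cdot u_{4,1}=84\ge u_{3,3}=156$, which is false. And in that same regime the crude bound is useless: with $k=s=3$ (which is admissible---take $(n_1,n_2,n_3)=(3,3,4)$, giving $\sum=11/12<1$) the crude bound would require $k/m=1\le 1-1/42$, again false. Your inductive step here only yields $\sum\le 1-2/u_{3,3}=1-1/78$, strictly weaker than the target $1-1/42$. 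The sentence ``the hypothesis $\sum 1/n_i<k-s+r/q$ combined with $\sum 1/n_i\le k/m$ forces $k$ to be sufficiently large'' is not correct: both are upper bounds on $\sum 1/n_i$ and together impose nothing on $k$ beyond $k\ge s$.

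The underlying reason your scheme breaks is that peeling off $n_1=m$ replaces $q$ by $qm$, and $u_{s,\cdot}$ grows doubly exponentially in its second argument; once $m>q+1$ the loss is not recoverable by a one-step comparison. The paper avoids this by never changing $q$: the induction stays at the same $q$ and compares products of the full tuple against the fixed extremal tuple.
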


\begin{proof} We use induction on $s$ to prove that if $1\le n_1 \le \cdots \le n_k$ are integers
such that $k-s+\frac{r}{q}-\frac{r}{u_{s+1,q}}\le \sum_{i=1}^k\frac{1}{n_i}<k-s+\frac{r}{q}$,
then $n_i=1$ for $i\le k-s$ and $n_i=\frac{1+u_{i-k+s,q}}{r}$ for $i>k-s$.

If $s=0$, then $k\le \sum_{i=1}^k\frac{1}{n_i}<k+\frac{r}{q}$, so that
$n_i=1$ for all $i$.

Let $s\ge 1$. The right inequality yields $s\le k$. Denote
$m_i=1$ for $1\le i\le k-s$ and $m_i=\frac{1+u_{i-k+s,q}}{r}$ for $k-s<i\le k$.
We have
$$
\sum_{i=1}^k\frac{1}{m_i}=k-s+\frac{r}{q}-\frac{r}{u_{s+1,q}} , \
\prod_{i=1}^k m_i=\frac{u_{s+1,q}}{r^sq}.
$$
Our hypothesis can be rewritten as
$$
1-\frac{q}{u_{s+1,q}}\le \frac{q}{r}(s-k+\sum_{i=1}^k \frac{1}{n_i})<1.
$$
The middle term can be represented as a fraction with denominator $r\prod_i n_i$.
By Lemma~\ref{o1}, $\frac{u_{s+1,q}}{q}\le r\prod_{i=1}^k n_i$.
Therefore $\prod_{i=1}^k m_i\le \prod_{i=1}^k n_i$. Then we can define
$$
j=\max\{1\le l\le k; \prod_{i\ge l} m_i\le \prod_{i\ge l} n_i \}.
$$

Assume $j=k$, that is $m_k\le n_k$. Then
$\sum_{i=1}^{k-1}\frac{1}{m_i}\le \sum_{i=1}^{k-1}\frac{1}{n_i}<(k-1)-(s-1)+\frac{r}{q}$.
By induction, $n_i=m_i$ for every $i\le k-1$. It follows that $n_k=m_k$.

Assuming $j<k$, we derive a contradiction.
Then $\prod_{i\ge j}n_i\ge \prod_{i\ge j}m_i$ and
$\prod_{i\ge p}n_i< \prod_{i\ge p}m_i$ for every $j<p\le k$.
It follows that $\prod_{i=j}^p n_i>\prod_{i=j}^pm_i$ for
every $j\le p<k$. We rewrite this as
$$
\prod_{i=j}^p \frac{1}{m_i}\ge \prod_{i=j}^p\frac{1}{n_i} \ (j\le p\le k),
$$
with strict inequality for $p\ne k$. By Lemma~\ref{sp},
$\sum_{i=j}^k \frac{1}{m_i}>\sum_{i=j}^k\frac{1}{n_i}$.
On the other hand, $\sum_{i=1}^{j-1}\frac{1}{n_i}<k-s+\frac{r}{q}$.
By induction, $\sum_{i=1}^{j-1}\frac{1}{n_i}\le \sum_{i=1}^{j-1}
\frac{1}{m_i}$.
Therefore $\sum_{i=1}^k\frac{1}{n_i}<\sum_{i=1}^k\frac{1}{m_i}$,
a contradiction.
\end{proof}

\begin{rem}
Notice that since $1+u_{1,q}$ and $1+u_{2,q}$ are relatively prime, if $s\ge 2$
equality is achieved only for $r=1$.
\end{rem}

\begin{prop}\label{lcmb}
Let $s\ge 0$ and $1\le r\le q$ be integers. If $1\le n_2\le \cdots \le n_k$ are integers such that
$\sum_{i=1}^k\frac{1}{n_i}=k-s+\frac{r}{q}$, then $\lcm(n_1,\ldots,n_k)\leq \frac{u_{s,q}}{r}$.
Equality holds if and only if $n_i=1$ for $1\le i\le k-s$, $n_i=\frac{1+u_{i-k+s,q}}{r}$ for $k-s<i<k$ and $n_k=\frac{u_{s,q}}{r}$.
\end{prop}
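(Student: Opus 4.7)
The plan is to adapt the strategy of Proposition~\ref{mp}: first bound $n_k$ by applying Proposition~\ref{mp} to the shortened tuple $(n_1, \dots, n_{k-1})$, then handle separately the subcases $\lcm = n_k$ and $\lcm > n_k$.

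For the first step, I would write $\sum_{i<k} 1/n_i = (k-1) - (s-1) + r/q - 1/n_k$, which is strictly less than $(k-1) - (s-1) + r/q$, and apply Proposition~\ref{mp} with parameter $s-1$ to obtain $\sum_{i<k} 1/n_i \le (k-1) - (s-1) + r/q - r/u_{s,q}$. This rearranges to $n_k \le u_{s,q}/r$, and the equality clause of Proposition~\ref{mp} forces, in case of equality, the pattern $n_i = 1$ for $i \le k-s$ and $n_i = (1+u_{i-k+s,q})/r$ for $k-s < i \le k-1$. Setting $M := \lcm(n_1, \dots, n_k)$, if $M = n_k$ then $M \le u_{s,q}/r$ is immediate, and the equality case of the proposition holds precisely here, noting that $u_{s,q} = q \prod_{j=1}^{s-1}(1+u_{j,q})$ ensures each $(1+u_{j,q})/r$ divides $u_{s,q}/r$.

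The harder case is $M > n_k$, so $c := M/n_k \ge 2$. I would apply Proposition~\ref{mp} to the $k$-tuple $(n_1, \dots, n_{k-1}, M)$, whose sum equals $k - s + r/q - (c-1)/M$ and is strictly less than $k - s + r/q$; this gives $rM \le (c-1)u_{s+1,q}$. Unfortunately this is not yet enough: for $c = 2$ it only gives $rM \le u_{s+1,q} = u_{s,q}(1+u_{s,q})$, which far exceeds the target $u_{s,q}$. The main obstacle is closing this gap by exploiting the divisibility $n_k \mid M$ in conjunction with $r n_k \le u_{s,q}$ from the first step and the recursion $u_{s+1,q} = u_{s,q}(1+u_{s,q})$. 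I expect this requires either iterating Proposition~\ref{mp} on a further refinement of the tuple (for instance $(n_1, \dots, n_{k-1}, M, M/(c-1))$ when $(c-1) \mid n_k$), or a partial-product comparison with the equality tuple in the spirit of Lemmas~\ref{sp} and~\ref{sp2}; this is where the bulk of the technical work lies.

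Given the strict inequality obtained in the case $M > n_k$, equality $\lcm = u_{s,q}/r$ forces $M = n_k$, at which point the equality clause of the first step pins down the tuple as exactly the pattern claimed in the statement.
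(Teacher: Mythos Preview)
Your first step is correct and useful: applying Proposition~\ref{mp} to $(n_1,\dots,n_{k-1})$ with parameter $s-1$ indeed yields $n_k\le u_{s,q}/r$, and this disposes of the case $\lcm=n_k$ together with the equality characterisation. The problem is the case $M:=\lcm(n_i)>n_k$. Your idea of inserting $M$ into the tuple and reapplying Proposition~\ref{mp} only gives $rM\le (c-1)u_{s+1,q}$ with $c=M/n_k\ge 2$, and as you yourself observe this is weaker than the target by a factor of order $u_{s,q}$. Neither of your proposed remedies closes this: iterating on tuples like $(n_1,\dots,n_{k-1},M,M/(c-1))$ runs into the difficulty that $M/(c-1)$ need not be an integer and in any case each further application of Proposition~\ref{mp} loses another factor of $u_{\cdot,q}$; and a bare partial-product comparison via Lemma~\ref{sp2} controls $\prod_i n_i$, not $\lcm(n_i)$.

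The missing ingredient is an arithmetic link between $L$ and $\prod_i n_i$. From $\frac{1}{n_i}=k-s+\frac{r}{q}-\sum_{j\ne i}\frac{1}{n_j}$ one sees that $n_i\mid\lcm(q,n_1,\dots,\widehat{n_i},\dots,n_k)$ for every $i$; combined with the fact that the reduced denominator of $r/q$ divides $L$, this forces, for every prime $p$, the maximal $p$-adic valuation among $q,n_1,\dots,n_k$ to be attained at least twice. Hence $L^2\le q\prod_i n_i$. Now Lemma~\ref{sp2} enters exactly as you anticipated: Proposition~\ref{mp} gives $\sum_{i\ge l}\frac{1}{n_i}\ge\sum_{i\ge l}\frac{1}{m_i}$ for all $l$ (where $(m_i)$ is the extremal tuple), whence $\prod_i n_i\le\prod_i m_i=u_{s,q}^2/(r^s q)$. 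Combining, $L^2\le u_{s,q}^2/r^s\le (u_{s,q}/r)^2$ once $s\ge 2$, and the equality analysis falls out since equality in Lemma~\ref{sp2} forces $n_i=m_i$. So your outline is on the right track up to the product comparison, but it cannot be completed without the $L^2\le q\prod n_i$ step.
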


\begin{proof} We prove by induction on $s$ that if $1\le n_2\le \cdots \le n_k$ are integers such that
$\sum_{i=1}^k\frac{1}{n_i}=k-s+\frac{r}{q}$ and $\lcm(n_1,\ldots,n_k)\ge \frac{u_{s,q}}{r}$, then
$n_i=1$ for $1\le i\le k-s$, $n_i=\frac{1+u_{i-k+s,q}}{r}$ for $k-s<i<k$ and $n_k=\frac{u_{s,q}}{r}$.

It follows that $s\ge 1$. If $s=1$, we must have $(n_i)=(1,\ldots,1,\frac{q}{r})$,
so the conclusion holds. Suppose $s\ge 2$. Let $m_i=1$ for $1\le i\le k-s$, $m_i=\frac{1+u_{i-k+s,q}}{r}$ for $k-s<i<k$ and $m_k=\frac{u_{s,q}}{r}$.
We have $m_1\le \cdots \le m_k$, $\sum_{i=1}^k\frac{1}{m_i}=k-s+\frac{r}{q}$ and
$\prod_{i=1}^km_i=\frac{u_{s,q}^2}{r^sq}$.

{\em Step 1}: We claim that $\sum_{i\ge l}\frac{1}{n_i}\ge \sum_{i\ge l}\frac{1}{m_i}$ for every $1\le l\le k$.

Indeed, equality holds for $l=1$. Let $1<l\le k-s+1$.
Then $\sum_{i<l}\frac{1}{n_i}\le l-1=\sum_{i<l} \frac{1}{m_i}$. Therefore $\sum_{i\ge l}\frac{1}{n_i}\ge
\sum_{i\ge l} \frac{1}{m_i}$.
Let $k-s+1<l\le k$. Then $\sum_{i=1}^{l-1}\frac{1}{n_i}<k-s+\frac{r}{q}=(l-1)-(l+s-k-1)+\frac{r}{q}$.
By Proposition \ref{mp}, $\sum_{i=1}^{l-1}\frac{1}{n_i}\le \sum_{i=1}^{l-1}\frac{1}{m_i}$.
Therefore $\sum_{i\ge l}\frac{1}{n_i}\ge \sum_{i\ge l} \frac{1}{m_i}$.

{\em Step 2}: By Step 1 and Lemma~\ref{sp2}, we obtain
$\prod_i\frac{1}{n_i}\ge \prod_i\frac{1}{m_i}$, that is $\prod_i n_i\le \prod_i m_i$.
And equality holds if and only if $n_i=m_i$ for all $i$.

{\em Step 3}: Denote $L=\lcm(n_1,\ldots,n_k)$. We claim that $L^2\le q\prod_{i=1}^kn_i$.

Indeed, $q\mid L$ and $n_i\mid \lcm(q,n_1,\ldots,\widehat{n_i},\ldots,n_k)$
for all $i$. Fix a prime $p$. The power of $p$ in $L$ is the highest power of $p$ occuring
in the prime decomposition of $q,n_1,\ldots,n_k$. From above, the maximum is attained at least twice.
Therefore $L^2\le q\prod_{i=1}^kn_i$.

{\em Step 4}:  We obtain $L^2\le q\prod_{i=1}^kn_i\le q\prod_{i=1}^km_i=\frac{u_{s,q}^2}{r^s}$.
Since $s\ge 2$, we obtain $L\le \frac{u_{s,q}}{r}$. We assumed the opposite inequality, so
$L=\frac{u_{s,q}}{r}$. It follows that $\prod_{i=1}^kn_i= \prod_{i=1}^km_i$, so $n_i=m_i$ for all $i$.
\end{proof}

\begin{rem}
Note that equality is achieved if $\frac{1+u_{i-k+s,q}}{r}$ for $k-s<i<k$ and $\frac{u_{s,q}}{r}$ are integers,
that is if and only if $s=1$ and $r\mid q$, or $s=2$ and $r \mid 1+q$, or $s \ge 3$ and $r = 1$.
\end{rem}

\begin{proof}[Proof of Theorem~\ref{main}]
Write $\delta=s-\frac{r}{q}$, where $s=\lfloor \delta\rfloor+1$ and $r=q(1-\{\delta\})$.
Then $k-\delta=k-s+\frac{r}{q}$, and we may apply Propositions \ref{mp} and \ref{lcmb}.
\end{proof}

\begin{proof}[Proof of Theorem~\ref{mr}]
Order the coefficients of $B$ as $0\le b_1\le \cdots\le b_k<1=b_{k+1}=\cdots=b_{k+c}$.
Let $b_i=1-\frac{1}{m_i}$, for $1\le i\le k$. Then $\sum_{i=1}^k\frac{1}{m_i}=k-(d-c+1+v)$.
Denote $r=\lcm(m_i)$. By Theorem~\ref{main}, $r \le \frac{u_{\lfloor t\rfloor+d-c+2,q}}{q(1-\{t\})}$.
Then $rB$ is a divisor with integer coefficients, and since the ambient space is $\bP^d$, the
semipositive Cartier divisor $r(K+B)$ is base point free.
\end{proof}


\end{document}